\tolerance=2000

\documentclass[11pt,draft]{amsart}
\usepackage{amssymb,url}

\theoremstyle{plain}
\newtheorem{theorem}{Theorem}
\newtheorem*{Theorem}{Theorem}
\newtheorem{lemma}{Lemma}
\newtheorem{corollary}{Corollary}
\newtheorem{question}{Question}

\DeclareMathOperator*{\Llim}{L-lim}

\newcommand{\C}{\mathbb{C}}

\newcommand{\N}{\mathbb{N}}
\newcommand{\Z}{\mathbb{Z}}
\newcommand{\R}{\mathbb{R}}

\newcommand{\irB}{\mathcal{B}}

\newcommand{\irH}{\mathcal{H}}
\newcommand{\irK}{\mathcal{K}}
\newcommand{\irL}{\mathcal{L}}
\newcommand{\irM}{\mathcal{M}}
\newcommand{\irN}{\mathcal{N}}

\newcommand{\irX}{\mathcal{X}}
\newcommand{\irY}{\mathcal{Y}}


\begin{document}

\title[]{Asymptotic limits of operators similar to normal operators}
\author{Gy\"orgy P\'al Geh\'er}
\address{Bolyai Institute\\
University of Szeged\\
H-6720 Szeged, Aradi v\'ertan\'uk tere 1, Hungary}
\address{MTA-DE "Lend\"ulet" Functional Analysis Research Group, Institute of Mathematics\\
University of Debrecen\\
H-4010 Debrecen, P.O. Box 12, Hungary}
\email{gehergy@math.u-szeged.hu}
\urladdr{\url{http://www.math.u-szeged.hu/~gehergy/}}

\begin{abstract}
Sz.-Nagy's famous theorem states that a bounded operator $T$ which acts on a complex Hilbert space $\irH$ is similar to a unitary operator if and only if $T$ is invertible and both $T$ and $T^{-1}$ are power bounded. There is an equivalent reformulation of that result which considers the self-adjoint iterates of $T$ and uses a Banach limit $L$. In this paper first we present a generalization of the necessity part in Sz.-Nagy's result concerning operators that are similar to normal operators. In the second part we provide characterization of all possible strong operator topology limits of the self-adjoint iterates of those contractions which are similar to unitary operators and act on a separable infinite-dimensional Hilbert space. This strengthens Sz.-Nagy's theorem for contractions.
\end{abstract}

\subjclass[2010]{Primary: 47B40, secondary: 47A45, 47B15.}
\keywords{Power bounded Hilbert space operators, similarity to normal operators, asymptotic behaviour}

\maketitle


\section{Introduction}
In this article $\irH$ will denote a complex Hilbert space and the symbol $\irB(\irH)$ will stand for the set of all bounded linear operators acting on $\irH$. We say that $T\in\irB(\irH)$ is a \emph{contraction} if $\|T\| \leq 1$. This article will primarily consider power bounded operators. An operator $T\in\irB(\irH)$ is said to be \emph{power bounded} if $\sup_{n\in\N}\|T^n\| < \infty$ holds.

The study of similarity problems in Hilbert spaces attracted the interest of many mathematicians and it seems to be extremely hard. The first result was given by B. Sz.-Nagy. Namely, in \cite{SzN} he managed to prove a theorem which (even today) belongs to the best known results concerning the study of Hilbert space operators that are similar to normal operators. In the present paper we intend to contribute to this theorem. Regarding this kind of similarity problems, N.-E. Benamara and N. Nikolski provided a resolvent test in \cite{BN} (see also \cite{Kup1, Kup3} for further results on this topic) which became widely known. 

Another type of similarity problems goes back also to Sz.-Nagy and to P. R. Halmos. They raised the following questions: is every power bounded operator similar to a contraction? Is every polynomially bounded operator similar to a contraction? These questions were answered in \cite{Foguel, Lebow, Pisier}. See also \cite{Halmos, Pisier_book}.

Now, we give some auxiliary definitions. The Banach space of bounded complex sequences is denoted by $\ell^\infty(\N)$. We call the linear functional 
\[ L\colon \ell^\infty(\N) \to \C,\quad \underline{x} = \{x_n\}_{n=1}^\infty \mapsto \Llim_{n\to\infty} x_n\] 
a \emph{Banach limit} if the following four conditions are satisfied:
\begin{itemize}
\item $\|L\| = 1$,
\item we have $\Llim_{n\to\infty} x_n = \lim_{n\to\infty} x_n$ for every convergent sequence,
\item $L$ is positive, i.\,e.\,if $x_n\geq 0$ for all $n\in\N$ then $\Llim_{n\to\infty} x_n \geq 0$, and
\item $L$ is shift-invariant, i.\,e.\,$\Llim_{n\to\infty} x_n = \Llim_{n\to\infty} x_{n+1}$.
\end{itemize}
Note that a Banach limit is never multiplicative (see \cite[Section III.7]{Co} for further details).

Let us fix a Banach limit $L$ and consider an arbitrary power bounded operator $T\in\irB(\irH)$. Then the following is a bounded sesqui-linear form
\[ w\colon\irH\times\irH\to\C, \quad w(x,y) := \Llim_{n\to\infty} \langle T^{*n}T^nx,y \rangle. \]
Hence there exists a positive operator $A_{T,L}\in\irB(\irH)$ such that the equation $w(x,y) = \langle A_{T,L}x,y \rangle$ holds for all vectors $x,y\in\irH$. The operator $A_{T,L}$ will be called the \emph{$L$-asymptotic limit} of the power bounded operator $T$, which usually depends on the particular choice of $L$ (see \cite{Ge_matrix}). In the case when $T$ is a contraction, the operator $A_{T,L}$ does not depend on $L$. In fact, in this case the sequence $\{T^{*n}T^n\}_{n=0}^\infty$ is decreasing, therefore it converges to an operator $A_T (= A_{T,L}$ for every Banach limit $L)$ in the strong operator topology (SOT). This positive contraction $A_T$ will be simply called the \emph{asymptotic limit} of $T$. All the possible asymptotic limits of contractions and the $L$-asymptotic limits of power bounded matrices were described by the author in \cite{Ge_contr} and \cite{Ge_matrix}. The present work can be considered a continuation of these two papers.

The $L$-asymptotic limit of a power bounded operator $T$ tells us how the orbits $\big\{\{T^n x\}_{n=1}^\infty \colon x\in\irH\big\}$ behave, since the following holds:
\[ \Llim_{n\to\infty} \|T^n x\|^2 = \|A_{T,L}^{1/2} x\|^2 = \|A_{T,L}^{1/2} T x\|^2 \]
(the equation above is not true in general if we delete the squares). 

The concept of asymptotic limit and their generalizations play important role in the hyperinvariant subspace problem (see e. g. \cite{BK}, \cite{Ke_isom_as}, \cite{Ke_gen_Toep} and \cite{NFBK}). They were also used in many papers concerning other topics. For example E. Durszt proved a generalization of the famous Rota model (see \cite{Ro}) for completely non-unitary contractions in \cite{Du}; a new proof for a Putnam-Fuglede type result was presented in \cite{DuKu} by B. P. Duggal and C. S. Kubrusly; G. Cassier considered similarity problems in \cite{Ca}; and in \cite{Ku_A_proj} it was pointed out how important it is to give several characterizations for the case when the asymptotic limit is idempotent.

Now, we state a well-known reformulation of Sz.-Nagy's theorem (see \cite{SzN} or \cite[Proposition 3.8.]{Ku_Model})

\begin{Theorem}[Reformulation of Sz.-Nagy's theorem]
Consider an arbitrary operator $T\in\irB(\irH)$ and fix a Banach limit $L$. The following three conditions are equivalent:
\begin{itemize}
\item[\textup{(i)}] $T$ is similar to a unitary operator,
\item[\textup{(ii)}] $T$ is power bounded and the $L$-asymptotic limits $A_{T,L}$ and $A_{T^*,L}$ are invertible,
\item[\textup{(iii)}] $T$ has bounded inverse and both $T^{-1}$ and $T$ are power bounded.
\end{itemize} 
Moreover, if we have an arbitrary power bounded operator $T\in\irB(\irH)$, then the next three conditions are also equivalent:
\begin{itemize}
\item[\textup{(i')}] $T$ is similar to an isometry,
\item[\textup{(ii')}] the $L$-asymptotic limit $A_{T,L}$ is invertible,
\item[\textup{(iii')}] there exists a constant $c > 0$ for which the inequality $\|T^n x \| \geq c\|x\|$ holds for every vector $x\in\irH$.
\end{itemize} 
\end{Theorem}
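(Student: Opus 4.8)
The plan is to establish the ``primed'' equivalences first and then read off the main part from them. The one computation everything rests on is the identity
\[ T^{*n}A_{T,L}T^n = A_{T,L}\qquad(n\in\N), \]
valid for any power bounded $T$: since $\langle A_{T,L}u,v\rangle = \Llim_n\langle T^nu,T^nv\rangle$ for all $u,v\in\irH$, substituting $u=Tx$, $v=Ty$ and invoking the shift-invariance of $L$ gives $\langle A_{T,L}Tx,Ty\rangle = \langle A_{T,L}x,y\rangle$, i.e.\ $T^*A_{T,L}T = A_{T,L}$, and iteration yields the displayed formula. Note also that $A_{T^*,L}$ is well defined whenever $T$ is power bounded, since $\|T^{*n}\| = \|T^n\|$.

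For the primed part I would run (i') $\Rightarrow$ (iii') $\Rightarrow$ (ii') $\Rightarrow$ (i'). If $T=V^{-1}SV$ with $S$ an isometry, then $\|T^nx\| = \|V^{-1}S^nVx\| \ge \|V\|^{-1}\|Vx\| \ge (\|V\|\,\|V^{-1}\|)^{-1}\|x\|$, which is (iii'). If $\|T^nx\| \ge c\|x\|$ for all $n$ and $x$, then applying the positive functional $L$ to the nonnegative sequence $\{\|T^nx\|^2-c^2\|x\|^2\}_n$ (for fixed $x$) gives $\langle A_{T,L}x,x\rangle\ge c^2\|x\|^2$, so $A_{T,L}$ is a positive operator bounded below, hence invertible; this is (ii'). (Conversely, if $A_{T,L}\ge c^2I$ then $c^2\|x\|^2\le\langle A_{T,L}T^nx,T^nx\rangle\le\|A_{T,L}\|\,\|T^nx\|^2$ by the identity above, recovering the uniform lower bound.) Finally, if $A_{T,L}$ is invertible then so is $A_{T,L}^{1/2}$, and $S:=A_{T,L}^{1/2}TA_{T,L}^{-1/2}$ satisfies $S^*S = A_{T,L}^{-1/2}(T^*A_{T,L}T)A_{T,L}^{-1/2} = I$; thus $S$ is an isometry and $T = A_{T,L}^{-1/2}SA_{T,L}^{1/2}$ is similar to it, which is (i').

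For the main part I would run (i) $\Rightarrow$ (iii) $\Rightarrow$ (ii) $\Rightarrow$ (i). From $T=V^{-1}UV$ with $U$ unitary one reads off directly that $T$ is invertible with $\|T^{\pm n}\|\le\|V\|\,\|V^{-1}\|$, giving (iii). Given (iii), set $M':=\sup_n\|T^{-n}\|<\infty$; from $\|x\| = \|T^{-n}T^nx\|\le M'\|T^nx\|$ we obtain a uniform lower bound for $\{T^n\}$, and since $(T^*)^{-n}=(T^{-n})^*$ the same holds for $\{T^{*n}\}$, so by the argument in the primed part $A_{T,L}$ and $A_{T^*,L}$ are both invertible, i.e.\ (ii). Assume (ii). Invertibility of $A_{T,L}$ produces, as above, an isometry $S=A_{T,L}^{1/2}TA_{T,L}^{-1/2}$ similar to $T$, and it remains to see that $S$ is onto. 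Invertibility of $A_{T^*,L}$ makes $T^*$ similar to an isometry, hence injective, so $\overline{\operatorname{ran} T}=\irH$; since $A_{T,L}^{-1/2}$ is bijective, $\operatorname{ran} S=A_{T,L}^{1/2}(\operatorname{ran} T)$, whose closure equals $A_{T,L}^{1/2}(\overline{\operatorname{ran} T})=\irH$ because $A_{T,L}^{1/2}$ is a homeomorphism; as an isometry has closed range, $S$ is a surjective isometry, i.e.\ unitary, whence $T$ is similar to a unitary.

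The implication needing the most care is (ii) $\Rightarrow$ (i): its real content is that invertibility of $A_{T^*,L}$ is precisely what promotes ``$T$ similar to an isometry'' to ``$T$ similar to a unitary'', through the density of $\operatorname{ran} T$. Everything else is routine bookkeeping with the identity $T^*A_{T,L}T=A_{T,L}$ and with invertible positive operators. Alternatively, (iii) $\Rightarrow$ (i) could simply be quoted from Sz.-Nagy's original theorem \cite{SzN}, but the route above is self-contained and makes the role of $A_{T,L}$ transparent.
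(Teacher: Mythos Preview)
The paper does not actually prove this theorem: it is stated as a known reformulation, with references to \cite{SzN} and \cite[Proposition~3.8]{Ku_Model}, and no proof is supplied. So there is no ``paper's own proof'' to compare against.

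That said, your argument is correct and self-contained. The key identity $T^*A_{T,L}T=A_{T,L}$ is derived cleanly from shift-invariance, the cycle (i')$\Rightarrow$(iii')$\Rightarrow$(ii')$\Rightarrow$(i') is sound, and the construction $S=A_{T,L}^{1/2}TA_{T,L}^{-1/2}$ is exactly the mechanism the paper itself exploits later (in the proof of Lemma~\ref{Sz.-Nagy_gen_lem}) in the more general setting where $A_{T,L}$ is only invertible on $\irH_0^\perp$. Your (ii)$\Rightarrow$(i) step is also fine; one small streamlining is available: since $A_{T^*,L}$ invertible makes $T^*$ similar to an isometry, $T^*$ is bounded below and therefore has closed range, which forces $T$ itself to be surjective (not merely to have dense range). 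Then $S=A_{T,L}^{1/2}TA_{T,L}^{-1/2}$ is surjective as a composition of surjections, and the homeomorphism-and-closure argument becomes unnecessary. Either way, the conclusion stands.
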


In this paper we investigate whether there is any connection between the asymptotic behaviour of power bounded operators that are similar to each other. In particular we will provide a new property of the $L$-asymptotic limits of operators that are similar to normal ones. This can be considered a generalization of the necessity part in Sz.-Nagy's theorem. After that we will strengthen Sz.-Nagy's theorem, i.\,e.\,we will characterize all the possible asymptotic limits of those contractions that are similar to unitary operators.


\section{Statements of the main theorems}

Before we present the statements of our main results, we need some definitions. The set $\irH_0(T) = \irH_0 := \{x\in\irH\colon \lim_{n\to\infty}\|T^n x\| = 0\}$ will be called the \emph{stable subspace} of $T$. It is well-known that $\ker A_{T,L} = \irH_0$ holds for every Banach limit (see \cite[Theorem 3]{Ke_gen_Toep}) and that $\irH_0$ is a hyperinvariant subspace of $T$. We recall the following classification of power bounded operators. We say that the power bounded operator $T$ is
\begin{itemize}
\item of class $C_{0\cdot}$ if $\irH_0(T) = \irH$, in notation $T\in C_{0\cdot}(\irH)$,
\item of class $C_{1\cdot}$ if $\irH_0(T) = \{0\}$, in notation $T\in C_{1\cdot}(\irH)$,
\item of class $C_{\cdot k}$ ($k\in\{0,1\}$) if $T^*\in C_{k\cdot}(\irH)$, in notation $T\in C_{\cdot k}(\irH)$,
\item of class $C_{jk}$ ($j,k\in\{0,1\}$) if $T\in C_{j\cdot}(\irH) \cap C_{\cdot k}(\irH)$, in notation $T\in C_{jk}(\irH)$.
\end{itemize}

If the operator $A\in\irB(\irH)$ is not zero, then the \emph{reduced minimum modulus} of $A$ is the quantity $\gamma(A) := \inf\{\|Ax\|\colon x\in(\ker A)^\perp, \|x\|=1\}$. If $A$ is a positive operator, then $\gamma(A)>0$ holds exactly when $A$ is the orthogonal sum of a zero and an invertible positive operator. Since the spectral radius of a power bounded operator is at most 1, any normal power bounded operator $N$ is a contraction. Thus $N$ is an orthogonal sum of a unitary operator and a normal contraction which is of class $C_{00}$. This can be easily seen from the functional model of normal operators. Hence the asymptotic limits $A_N$ and $A_{N^*}$ coincide and they are always the orthogonal projections with range $\irH_0^\perp$. It is natural to ask whether the alternative $\gamma(A_{T,L}) > 0$ or $A_{T,L} = 0$ holds for a power bounded operator $T$ which is similar to a normal operator. As we can see from the next theorem, which will be proven in Section \ref{gen_sec}, this is indeed the case.

\begin{theorem}\label{similar_thm}
Let us consider two power bounded operators $T,S\notin C_{0\cdot}(\irH)$ which are similar to each other. Then $\gamma(A_{T,L})>0$ holds for some (and then for all) Banach limits $L$ if and only if $\gamma(A_{S,L})>0$ is valid. 

Moreover, $\gamma(A_{T,L})>0$ holds if and only if the powers of $T$ are bounded from below uniformly on $\irH_0^\perp$, i.e. there exists a constant $c > 0$ such that 
\[ 
c\|x\| \leq \|T^n x\| \quad (x\in\irH_0^\perp, n\in\N).
\]

In particular, if $T$ is similar to a normal operator, then $\gamma(A_{T,L})>0$ and $\gamma(A_{T^*,L})>0$ are satisfied.
\end{theorem}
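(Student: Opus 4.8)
The plan is to prove the three assertions of Theorem~\ref{similar_thm} in the order in which they are stated, since the second one does the real work and the first and third follow from it quickly.

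First I would establish the ``moreover'' equivalence: $\gamma(A_{T,L})>0$ if and only if the powers of $T$ are uniformly bounded from below on $\irH_0^\perp$. Recall from the excerpt that $\ker A_{T,L}=\irH_0$ always, so the positivity of $\gamma(A_{T,L})$ is equivalent to $A_{T,L}$ being bounded from below on $\irH_0^\perp=(\ker A_{T,L})^\perp$, i.e.\ to the existence of a $\delta>0$ with $\langle A_{T,L}x,x\rangle\ge\delta^2\|x\|^2$ for all $x\in\irH_0^\perp$. Using the identity $\Llim_{n\to\infty}\|T^nx\|^2=\|A_{T,L}^{1/2}x\|^2=\langle A_{T,L}x,x\rangle$ quoted in the introduction, the direction ``$\gamma>0\Rightarrow$ uniform lower bound'' requires upgrading an inequality about the $L$-limit to an inequality valid for \emph{every} $n$. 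Here I would invoke power boundedness: if $M=\sup_n\|T^n\|$, then for any $m,n$ we have $\|T^{m}x\|\le\|T^{m-n}\|\cdot\|T^nx\|$ when $m\ge n$, so $\|T^nx\|\ge M^{-1}\|T^mx\|$; taking the $L$-limit in $m$ (which is legitimate since $m\mapsto\|T^mx\|^2$ is bounded, and $L$ is positive and dominates nothing larger than the sup) yields $\|T^nx\|^2\ge M^{-2}\Llim_m\|T^mx\|^2\ge M^{-2}\delta^2\|x\|^2$, giving $c=\delta/M$. The converse direction ``uniform lower bound $\Rightarrow\gamma>0$'' is immediate: if $\|T^nx\|\ge c\|x\|$ for all $n$ and all $x\in\irH_0^\perp$, then $\langle A_{T,L}x,x\rangle=\Llim_n\|T^nx\|^2\ge c^2\|x\|^2$ by positivity of $L$, and since $\ker A_{T,L}=\irH_0$ this says exactly $\gamma(A_{T,L})\ge c>0$.

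Next I would prove the similarity-invariance statement. Let $S=XTX^{-1}$ with $X$ invertible. I would first check that the stable subspaces are related by $\irH_0(S)=X\,\irH_0(T)$ (since $\|S^nx\|=\|XT^nX^{-1}x\|$ is comparable to $\|T^nX^{-1}x\|$), so neither side is $C_{0\cdot}$ precisely when the other is not, and $\irH_0(S)^\perp$ and $\irH_0(T)^\perp$ are both nonzero. Now suppose $\gamma(A_{T,L})>0$; by the part just proved there is $c>0$ with $\|T^ny\|\ge c\|y\|$ for all $y\in\irH_0(T)^\perp$. I want a uniform lower bound for $S$ on $\irH_0(S)^\perp$. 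The subtlety is that $X$ need not map $\irH_0(T)^\perp$ onto $\irH_0(S)^\perp$ — it maps it onto $(X^{-1})^*(\irH_0(T)^\perp)$ type of thing only after adjusting — so I would instead argue on the whole space up to a bounded perturbation. Concretely: decompose an arbitrary $z\in\irH$ as $X^{-1}z = y_0+y_1$ with $y_0\in\irH_0(T)$, $y_1\in\irH_0(T)^\perp$; then $\|S^nz\|=\|XT^n(y_0+y_1)\|\ge\|X^{-1}\|^{-1}\|T^ny_1\|-\|X\|\cdot\|T^ny_0\|$, and since $\|T^ny_0\|\to0$ and $\|T^ny_1\|\ge c\|y_1\|$, for $z$ with a definite angle to $\irH_0(S)$ one gets $\liminf_n\|S^nz\|\ge(\text{const})\|z\|$. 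Cleaner is to pass back through the $L$-limit: $\langle A_{S,L}z,z\rangle=\Llim_n\|S^nz\|^2$, and using $S^n=XT^nX^{-1}$ together with $\|T^n(y_0+y_1)\|\ge\|T^ny_1\|-\|T^ny_0\|$ and $\Llim_n\|T^ny_0\|^2=0$, $\Llim_n\|T^ny_1\|^2\ge c^2\|y_1\|^2$, one finds $\langle A_{S,L}z,z\rangle\ge\|X^{-1}\|^{-2}c^2\|y_1\|^2$. Since $\ker A_{S,L}=\irH_0(S)$ and $\irH_0(S)=X\irH_0(T)=X(\{y_1=0\})$, the condition $z\in\irH_0(S)^\perp$ forces $\|y_1\|$ to be bounded below by a multiple of $\|z\|$ (the projection onto $\irH_0(T)^\perp$ restricted to $X^{-1}(\irH_0(S)^\perp)$ is bounded below because it is injective with closed range between the relevant subspaces), so $\gamma(A_{S,L})>0$. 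The role of ``$S,T\notin C_{0\cdot}$'' is exactly to guarantee these complementary subspaces are nontrivial so the reduced minimum modulus is a meaningful positive quantity rather than vacuously about the zero operator. By symmetry (replace $X$ by $X^{-1}$) the equivalence follows, and the ``some iff all Banach limits'' clause is subsumed since the uniform-lower-bound characterization does not mention $L$.

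Finally, the ``in particular'' clause: if $T$ is similar to a normal operator $N$, then as observed in the paragraph preceding the theorem $N$ splits as $U\oplus N_0$ with $U$ unitary and $N_0\in C_{00}$, so $A_N=A_{N^*}$ is the orthogonal projection onto $\irH_0(N)^\perp$; in particular $\gamma(A_N)=1>0$ (when $N\notin C_{0\cdot}$) and $\gamma(A_{N^*})=1>0$, and if $N\in C_{0\cdot}$ then $A_N=0$ and there is nothing to prove. Applying the similarity-invariance to the pair $(T,N)$ and to the pair $(T^*,N^*)$ (note $T^*$ is similar to $N^*$ via $(X^{-1})^*$) yields $\gamma(A_{T,L})>0$ and $\gamma(A_{T^*,L})>0$ (interpreting, in the degenerate case, $A_{T,L}=0$ as the excluded alternative). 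The main obstacle I anticipate is the bookkeeping in the second paragraph: showing that the oblique projection constants relating $\irH_0(T)^\perp$, $X^{-1}\irH_0(S)^\perp$ and the ambient space are genuinely bounded below, i.e.\ controlling the interplay between $X$ and the orthogonal decomposition; everything else is a routine application of power boundedness and positivity of the Banach limit.
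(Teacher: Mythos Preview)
Your plan is correct and can be completed, but it proceeds quite differently from the paper. The paper first isolates an intermediate structural characterization: using K\'erchy's block-triangular decomposition (Lemma~\ref{Ker_dec_lem}) it shows that $\gamma(A_{T,L})>0$ is equivalent to the compression $T_1=P_1T|\irH_0^\perp$ being \emph{similar to an isometry}, via the isometric asymptote equation $VA_1^{1/2}=A_1^{1/2}T_1$. The similarity-invariance is then reduced to a block-matrix computation: after conjugating by a unitary to align stable subspaces, one checks with Lemma~\ref{inv_uper_block-triang_lem} that the corresponding compression of (a unitary conjugate of) $S$ equals $Y_{22}T_1Y_{22}^{-1}$, hence is again similar to an isometry. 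Your route bypasses the isometry characterization entirely: for the ``moreover'' clause you use the single inequality $\|T^mx\|\le M\|T^nx\|$ $(m\ge n)$ together with positivity of $L$ to pass from the $L$-limit bound to a uniform bound, and for similarity-invariance you estimate $\langle A_{S,L}z,z\rangle$ directly via the splitting $X^{-1}z=y_0+y_1$. The one point you flag as an obstacle is genuine but easily dispatched: since $\irH_0(S)=X\irH_0(T)$, the closed subspaces $\irH_0(T)$ and $X^{-1}\irH_0(S)^\perp$ satisfy $\irH_0(T)\cap X^{-1}\irH_0(S)^\perp=\{0\}$ and $\irH_0(T)+X^{-1}\irH_0(S)^\perp=\irH$, so the oblique projection $Q$ onto $X^{-1}\irH_0(S)^\perp$ along $\irH_0(T)$ is bounded (closed graph theorem), and for $w\in X^{-1}\irH_0(S)^\perp$ one has $w=QP_1w$, whence $\|P_1w\|\ge\|Q\|^{-1}\|w\|$. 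Your argument is more elementary and self-contained; the paper's has the payoff of exposing the ``$T_1$ similar to an isometry'' equivalence, which is of independent interest and drives later applications.
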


Theorem \ref{similar_thm} helps us decide whether a given operator is similar to a normal operator. Similarity to other classes can be tested as well. This will be discussed immediately after proving the above theorem. 

In Section \ref{contr_sim_to_U_sec} we will prove the next theorem which provides us further information about the asymptotic limit $A_T$ of a contraction $T\in\irB(\irH)$ that is similar to a unitary operator. We will denote the inner spectral radius of a positive operator $A\in\irB(\irH)$ by $\underline{r}(A) = \min\sigma(A)$. The symbol $\sigma_e(B)$ will denote the \emph{essential spectrum} of an operator $B\in\irB(\irH)$.

\begin{theorem}\label{similar_to_unitary_thm}
Let $\irH$ be an infinite-dimensional Hilbert space and let $T\in\irB(\irH)$ be a contraction which is similar to a unitary operator. Then the relation $\dim\ker(A_T-\underline{r}(A_T) I) \in \{0, \infty\}$ is fulfilled. Consequently, the condition $\underline{r}(A_T)\in \sigma_e(A_T)$ holds.
\end{theorem}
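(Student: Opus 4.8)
The plan is to linearise the problem by passing to a unitary model built from $A_T$ itself. Since $T$ is similar to a unitary operator, Sz.-Nagy's theorem tells us that $A_T$ is invertible and that $T$ is invertible. Moreover $T^*A_TT=A_T$ (immediate from the fact that $A_T$ is the SOT limit of $T^{*n}T^n$ together with $T^*(T^{*n}T^n)T=T^{*(n+1)}T^{n+1}$), and, taking inverses and using the invertibility of $T$, one also obtains $TA_T^{-1}T^*=A_T^{-1}$. These two identities show that $W:=A_T^{1/2}TA_T^{-1/2}$ is unitary. Setting $P:=A_T^{-1}$, a direct computation gives $W^*PW=A_T^{-1/2}T^*TA_T^{-1/2}$ and $W^{*n}PW^n=A_T^{-1/2}T^{*n}T^nA_T^{-1/2}$; hence $P\ge I$, the contraction hypothesis $T^*T\le I$ yields $W^*PW\le P$, and $W^{*n}PW^n\to I$ in SOT. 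Since $A_T$ is invertible we have $\ker(A_T-\underline r(A_T)I)=\ker(P-\|P\|I)$, so the theorem reduces to the following purely ``unitary'' claim: \emph{if $W$ is unitary, $P\ge I$, $W^*PW\le P$ and $W^{*n}PW^n\to I$ in SOT, then $\dim\ker(P-\|P\|I)\in\{0,\infty\}$.}

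First I would dispose of the degenerate case $\|P\|=1$, in which $I\le P$ forces $P=I$, i.e.\ $A_T=I$, so $\ker(A_T-\underline r(A_T)I)=\irH$ is infinite-dimensional. Thus I may assume $\beta:=\|P\|>1$ and, aiming for a contradiction, that $\irN:=\ker(P-\beta I)$ satisfies $1\le\dim\irN<\infty$. The decisive point is to read the hypotheses along the \emph{backward} orbit. Since $W^*PW\le P$ is equivalent to $WPW^*\ge P$, for a fixed unit vector $x\in\irN$ the scalar sequence $n\mapsto\langle W^nPW^{*n}x,x\rangle$ is nondecreasing; its $n=0$ term is $\langle Px,x\rangle=\beta$, while every term is at most $\|W^nPW^{*n}\|\,\|x\|^2=\|P\|=\beta$. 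Hence the sequence is identically $\beta$, and since $P\le\beta I$ this forces $W^{*n}x\in\ker(P-\beta I)=\irN$ for all $n\ge 0$. Consequently $W^*\irN\subseteq\irN$; as $W^*$ is injective and $\irN$ finite-dimensional, $W^*|_\irN$ is an automorphism of $\irN$, so $W\irN=\irN$ and $W|_\irN$ is a unitary operator on the nonzero finite-dimensional space $\irN$. Choosing a unit eigenvector $x_0\in\irN$ with $Wx_0=\lambda x_0$, $|\lambda|=1$, we obtain $\langle W^{*n}PW^nx_0,x_0\rangle=\langle Px_0,x_0\rangle=\beta$ for every $n$, which contradicts $W^{*n}PW^n\to I$ in SOT (that would give the limit $\|x_0\|^2=1\ne\beta$). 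This contradiction establishes $\dim\irN\in\{0,\infty\}$.

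The last clause is then a soft spectral remark: $\underline r(A_T)=\min\sigma(A_T)$ always lies in $\sigma(A_T)$, and an isolated point of the spectrum of a self-adjoint operator is automatically an eigenvalue of finite multiplicity. The dichotomy just proved forbids $\ker(A_T-\underline r(A_T)I)$ from being finite-dimensional and nonzero, so $\underline r(A_T)$ is never an isolated eigenvalue of finite multiplicity, i.e.\ $\underline r(A_T)\in\sigma_e(A_T)$.

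I expect the only genuine difficulty to be conceptual rather than computational: recognising that the contraction hypothesis must be exploited through the adjoint/backward orbit $\{W^nPW^{*n}\}_n$, where monotonicity together with the trivial norm bound $\|W^nPW^{*n}\|=\|P\|$ pinches the quadratic form of $P$ at $x\in\irN$ against its maximal value, rather than through the forward orbit $\{W^{*n}PW^n\}_n$, which only records that this form decreases down to $1$ and by itself localises no eigenvector of $P$. Once the model $W=A_T^{1/2}TA_T^{-1/2}$ and this direction are fixed, every remaining step is a one-line verification.
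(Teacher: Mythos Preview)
Your argument is correct and reaches the same conclusion as the paper, but the route is genuinely different. The paper works directly with $T$ and $A_T$: for $h\in\irM:=\ker(A_T-\underline r\, I)$ it uses the identity $\|A_T^{1/2}x\|=\|A_T^{1/2}Tx\|$ with $x=T^{-1}h$ together with $A_T\ge\underline r\, I$ to obtain $\|T^{-1}h\|\le\|h\|$, combines this with the contraction inequality $\|h\|=\|TT^{-1}h\|\le\|T^{-1}h\|$ to force equality, and reads off $T^{-1}\irM\subseteq\irM$ from the equality case; finite-dimensionality then makes $T|\irM$ unitary, so $\irM$ reduces the contraction $T$ and $A_T|\irM=I|\irM$, contradicting $\underline r<1$. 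You instead pass to the unitary model $W=A_T^{1/2}TA_T^{-1/2}$ and the operator $P=A_T^{-1}$, isolate the clean abstract statement ``$W$ unitary, $P\ge I$, $W^*PW\le P$, $W^{*n}PW^n\to I$ in SOT $\Rightarrow$ $\dim\ker(P-\|P\|I)\in\{0,\infty\}$'', and prove it via monotonicity of $\langle W^nPW^{*n}x,x\rangle$ along the backward orbit together with an eigenvector of $W|\irN$. Both arguments ultimately show that the extremal eigenspace is invariant under the ``inverse'' direction and then extract a contradiction; the paper's version is shorter and avoids the conjugation, while yours has the merit of packaging the core mechanism as a self-contained statement about a unitary and a positive operator.

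One small slip in your last paragraph: the sentence ``an isolated point of the spectrum of a self-adjoint operator is automatically an eigenvalue of finite multiplicity'' is false as written (take $A=I$ on an infinite-dimensional space). What you actually need, and what your final sentence correctly uses, is the characterisation $\sigma(A)\setminus\sigma_e(A)=\{\text{isolated eigenvalues of finite multiplicity}\}$ for self-adjoint $A$; with the dichotomy in hand, $\underline r(A_T)$ is either not an eigenvalue at all or an eigenvalue of infinite multiplicity, and in both cases it lies in $\sigma_e(A_T)$.
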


The author showed in \cite[Theorem 6]{Ge_contr} that whenever $T$ is a contraction acting on a separable infinite-dimensional space, then $1\in \sigma_e(A_T)$ or $A_T$ is a finite-rank projection. In the light of Sz.-Nagy's theorem, \cite[Theorem 6]{Ge_contr} and Theorem \ref{similar_to_unitary_thm}, if $\dim\irH = \aleph_0$ and $T\in\irB(\irH)$ is a contraction which is similar to a unitary operator, then necessarily $A_T$ is an invertible, positive contraction and the conditions $1\in\sigma_e(A_T)$ and $\dim\ker(A_T-\underline{r}(A_T) I) \in \{0, \infty\}$ are satisfied. We will prove in Section \ref{contr_sim_to_U_sec} that the converse is also true.

\begin{theorem}\label{similar_to_unitary_rev_thm}
Let $\irH$ be a separable infinite-dimensional Hilbert space, let $A\in\irB(\irH)$ be a positive, invertible contraction and suppose that the conditions $1\in\sigma_e(A)$ and $\dim\ker(A-\underline{r}(A)I) \in \{0,\aleph_0\}$ are fulfilled. Then there exists a contraction $T\in\irB(\irH)$ which is similar to a unitary operator and the asymptotic limit of $T$ is exactly $A$.
\end{theorem}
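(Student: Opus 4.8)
The plan is to build $T$ explicitly as an operator-weighted bilateral shift (or a direct sum of such objects) so that its asymptotic limit can be computed directly and forced to equal the prescribed $A$. First I would diagonalize: write $A = \int_{[\underline r(A),1]} \lambda \, dE(\lambda)$ via the spectral theorem, and split off the "bottom eigenspace" $\irK_0 := \ker(A - \underline r(A)I)$, which by hypothesis has dimension $0$ or $\aleph_0$, from its complement on which $A$ has spectrum accumulating arbitrarily close to $\underline r(A)$ but $\underline r(A)$ itself is not an eigenvalue. The key structural fact I want to exploit is a converse-model statement: a positive contraction $A$ is the asymptotic limit $A_T$ of a contraction $T \in C_{1\cdot}(\irH)$ that is similar to a unitary precisely when $A$ is invertible (which gives $C_{1\cdot}$ and, by Sz.-Nagy, similarity to an isometry) and, in addition, there is "enough room at the top of the spectrum" — concretely $1 \in \sigma_e(A)$ — to also make $T^*$ lie in $C_{1\cdot}$, so that $T$ is similar to a unitary rather than merely to an isometry. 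I would recall from \cite{Ge_contr} the construction showing that every positive contraction $A$ with $1 \in \sigma_e(A)$ (and $A$ not a finite-rank projection) arises as $A_T$ for some $C_{10}$ contraction, and then modify that construction to land in $C_{11}$.

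The main steps, in order. (1) Reduce to the $C_{11}$ case: since $A$ is invertible, any contraction $T$ with $A_T = A$ automatically satisfies $\ker A_T = \irH_0(T) = \{0\}$, so $T \in C_{1\cdot}$; it remains to arrange $T^* \in C_{1\cdot}$ too, equivalently $A_{T^*}$ invertible, which by Sz.-Nagy's reformulation (conditions (i)–(iii) of the Reformulation theorem) is exactly what upgrades "similar to an isometry" to "similar to a unitary." (2) Construct a candidate $T$ on $\irH \cong \bigoplus_{n \in \Z} \irH$ as a weighted bilateral shift $T(\{x_n\}) = \{W_{n-1} x_{n-1}\}$ with positive operator weights $W_n \in \irB(\irH)$, all $\le I$; then $T^{*m}T^m$ acts diagonally, $T^{*m}T^m(\{x_n\}) = \{ W_{n-m}^2 \cdots W_{n-1}^2 \, x_n \}$, so $A_T$ is the SOT-limit of these products along each coordinate. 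Choosing the $W_n$ so that the infinite products $\prod_{k \ge 1} W_{n-k}^2$ converge to a prescribed positive operator on each summand, and matching the aggregate against the spectral data of $A$, gives $A_T = A$. (3) Handle the bottom eigenspace $\irK_0$: if $\dim \irK_0 = \aleph_0$, split $\irH = \irK_0 \oplus \irK_0^\perp$ and define $T$ as a direct sum of a piece on $\irK_0$ with asymptotic limit $\underline r(A) I_{\irK_0}$ (a suitably rescaled bilateral shift, or Sz.-Nagy–Foias model piece) and a piece on $\irK_0^\perp$ handling the rest of the spectrum; if $\dim \irK_0 = 0$, only the second piece is needed. Use $1 \in \sigma_e(A)$ to guarantee that in the block decomposition there is an infinite-dimensional reducing-type part on which $T$ behaves like a unitary plus a small perturbation, forcing $A_{T^*}$ to be invertible as well. (4) Verify $T$ is a contraction (immediate from $\|W_n\| \le 1$), that $A_T = A$ (from the product computation of step 2), and that both $T, T^* \in C_{1\cdot}$ (from invertibility of $A_T$ and of $A_{T^*}$); conclude by Sz.-Nagy that $T$ is similar to a unitary.

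The hard part will be step (3) together with the fine tuning in step (2): one must simultaneously (a) reproduce the exact spectral measure of $A$ — not just its spectrum — as the asymptotic limit, which requires the operator-valued weights to be genuinely non-scalar and carefully chosen so their infinite products have the right spectral distribution, and (b) keep control of the backward orbit so that $A_{T^*}$ stays bounded below, which is where the essential-spectrum hypothesis $1 \in \sigma_e(A)$ must be used in an essential way rather than just cosmetically. I expect the cleanest route is to first treat the case $\underline r(A) \notin \sigma_p(A)$ (so $\dim \irK_0 = 0$) by the weighted-shift construction, then append an orthogonal summand carrying the eigenspace $\irK_0$ when it is present and infinite-dimensional, checking that the direct sum's asymptotic limit is the direct sum of the asymptotic limits and that $1 \in \sigma_e$ of each summand is either inherited or unnecessary for that summand. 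A secondary subtlety is separability: the construction on $\bigoplus_{n\in\Z}\irH$ a priori enlarges the space, so one must either show the relevant part of $T$ acts on a separable reducing subspace of the right dimension, or build the model intrinsically on the given $\irH$ by using the spectral decomposition of $A$ to index the shift, keeping everything separable throughout.
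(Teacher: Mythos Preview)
Your overall shape---build $T$ as a bilateral operator-weighted shift whose asymptotic limit is forced to be $A$---is close to what the paper does, but step~(3) contains a genuine error that breaks the argument.

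You propose, when $\dim\irK_0=\aleph_0$, to write $T=T_0\oplus T_1$ with $T_0$ a contraction on $\irK_0$ having $A_{T_0}=\underline r(A)\,I_{\irK_0}$. No such contraction exists. As the paper recalls (and as you will have seen in \cite{Ge_contr}), for any contraction $S$ on a separable infinite-dimensional space either $1\in\sigma_e(A_S)$ or $A_S$ is a finite-rank projection; in particular $A_S=rI$ with $0<r<1$ is impossible. (Even more directly: if $A_S=rI$ with $r>0$ then the intertwining relation $VA_S^{1/2}=A_S^{1/2}S$ forces $S$ itself to be an isometry, hence $r=1$.) So the bottom eigenspace \emph{cannot} be split off as a reducing summand for $T$; it must be woven into the shift so that its vectors are eventually carried toward the top of the spectrum of $A$. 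Your parenthetical ``checking that $1\in\sigma_e$ of each summand is either inherited or unnecessary for that summand'' is exactly where this fails: for the $\irK_0$-summand it is neither.

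The paper sidesteps both this obstruction and your ``secondary subtlety'' about enlarging the space by a single clean device: it sets $T:=A^{-1/2}UA^{1/2}$ for a unitary $U$ on the \emph{given} $\irH$, so similarity to a unitary is automatic and no verification of $A_{T^*}$ is needed. The work then reduces to finding an $A$-reducing decomposition $\irH=\bigoplus_{k\in\Z}\irY_k$ with $U\irY_k=\irY_{k+1}$, $\underline r(A|\irY_k)\to 1$ as $k\to+\infty$, and $\|A^{1/2}y\|\le\|A^{1/2}Uy\|$ for $y\in\irY_k$ (Lemma~\ref{rev_lem}); this monotonicity makes $T^*$ a contraction and forces $A_T=A$. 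The eigenspace $\irM=\ker(A-\underline r(A)I)$ is handled by chopping it into infinitely many pieces $\irM_k$ and placing one piece inside each $\irY_k$ for $k<0$, so that $U$ shifts those pieces forward and eventually mixes them into the high-spectrum part of $A$. The hypothesis $\dim\irM\in\{0,\aleph_0\}$ is precisely what allows this equal-cardinality splitting, and $1\in\sigma_e(A)$ is what guarantees all the $\irY_k$ can be taken infinite-dimensional so the pieces fit. If you want to salvage your approach, the fix is to abandon the direct-sum treatment of $\irK_0$ and instead distribute it across the coordinates of your bilateral shift in this manner---at which point you are essentially reproducing the paper's construction.
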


We note that the $L$-asymptotic limits of power bounded operators that are similar to unitary operators and which act on a finite-dimensional Hilbert space were characterized in \cite{Ge_matrix}. We will close this paper with posing and discussing some questions in Section \ref{fin_rem_ques_sec}.


\section{Generalization of the necessity part in Sz.-Nagy's theorem}\label{gen_sec}

In this section we present the proof of Theorem \ref{similar_thm}. We begin by stating K\'erchy's lemma which will give a significant contribution to this section.

\begin{lemma}[K\'erchy \cite{Ke_isom_as}]\label{Ker_dec_lem}
Consider a power bounded operator $T\in\irB(\irH)$ and the orthogonal decomposition $\irH = \irH_0\oplus\irH_0^\perp$. The block-matrix form of $T$ in this decomposition is
\begin{equation}\label{Ker_dec_eq}
T =
\left(\begin{matrix}
T_0 & R\\
0 & T_1
\end{matrix}\right) \in \irB(\irH_0\oplus\irH_0^\perp)
\end{equation}
where the elements $T_0$ and $T_1$ are of class $C_{0\cdot}$ and $C_{1\cdot}$, respectively.
\end{lemma}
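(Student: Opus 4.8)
The plan is to exploit the fact, recalled just before the lemma, that the stable subspace $\irH_0$ is hyperinvariant, hence in particular invariant, for $T$. Invariance of $\irH_0$ immediately produces the block-triangular form \eqref{Ker_dec_eq} relative to $\irH = \irH_0\oplus\irH_0^\perp$: the lower-left corner vanishes, $T_0 = T|_{\irH_0}$ is the genuine restriction of $T$ to $\irH_0$, and $T_1 = P_1 T|_{\irH_0^\perp}$ is the compression of $T$ to $\irH_0^\perp$, where $P_1$ denotes the orthogonal projection onto $\irH_0^\perp$. Since $\|T_0^n\|\le\|T^n\|$ and $\|T_1^n\|\le\|T^n\|$, both summands are power bounded by $K := \sup_{n}\|T^n\|$, so the classes $C_{0\cdot}$ and $C_{1\cdot}$ are meaningful for them.

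First I would dispatch $T_0\in C_{0\cdot}(\irH_0)$, which is routine: because $\irH_0$ is invariant we have $T_0^n x = T^n x$ for every $x\in\irH_0$, and $\lim_{n}\|T^n x\| = 0$ holds by the very definition of $\irH_0$. Thus $\irH_0(T_0) = \irH_0$, i.e. $T_0$ is of class $C_{0\cdot}$.

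The real content is $T_1\in C_{1\cdot}(\irH_0^\perp)$, i.e. $\irH_0(T_1) = \{0\}$. Reading powers off the triangular form gives $T_1^n = P_1 T^n|_{\irH_0^\perp}$, so the difficulty is that $T_1$ is a compression: $\|T_1^n x\|$ controls only the $\irH_0^\perp$-component of the full orbit $T^n x$, not its $\irH_0$-component. I would fix $x\in\irH_0^\perp$ with $\lim_{n}\|T_1^n x\| = 0$ and aim to show $\lim_{n}\|T^n x\| = 0$, which forces $x\in\irH_0\cap\irH_0^\perp = \{0\}$. The device is to iterate: for fixed $N$ write $T^n x = T^{n-N}(T^N x)$ and split $T^N x = u_N + v_N$ into its $\irH_0$-part $u_N = (I-P_1)T^N x$ and its $\irH_0^\perp$-part $v_N = P_1 T^N x = T_1^N x$. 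As $u_N\in\irH_0$ and $\irH_0$ is invariant, $\|T^{n-N}u_N\| = \|T_0^{n-N}u_N\|\to 0$ as $n\to\infty$ by the $C_{0\cdot}$ property just established, while $\|T^{n-N}v_N\|\le K\|v_N\| = K\|T_1^N x\|$ by power boundedness.

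Given $\varepsilon>0$ I would then choose $N$ so large that $K\|T_1^N x\|<\varepsilon/2$ (possible since $\|T_1^N x\|\to 0$), and then, with this $N$ held fixed, choose $n$ large enough that $\|T_0^{n-N}u_N\|<\varepsilon/2$. Combining the two estimates yields $\|T^n x\|<\varepsilon$ for all sufficiently large $n$, hence $x\in\irH_0$ and therefore $x=0$. The point requiring care is the order of the two choices---first $N$, suppressing the $\irH_0^\perp$-tail uniformly through the power bound, and only then $n$, suppressing the $\irH_0$-part through stability of $T_0$---and I expect this quantifier bookkeeping, together with the identification of $T_1^n$ with the compressed power $P_1 T^n|_{\irH_0^\perp}$, to be the main obstacle to a clean write-up.
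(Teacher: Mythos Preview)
Your argument is correct. The paper does not actually supply a proof of this lemma; it is quoted from K\'erchy's paper \cite{Ke_isom_as} and used as a black box, so there is no ``paper's proof'' to compare against. Your write-up recovers the standard argument: invariance of $\irH_0$ gives the upper-triangular form and the $C_{0\cdot}$ property of $T_0$ immediately, and for $T_1\in C_{1\cdot}$ you correctly exploit the identity $T_1^n = P_1 T^n|_{\irH_0^\perp}$ together with the two-step $\varepsilon/2$ splitting---first fix $N$ via power boundedness to control the $\irH_0^\perp$-tail, then send $n\to\infty$ to kill the $\irH_0$-part via stability of $T_0$. The quantifier order you flag is exactly the point, and you have it right.
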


Using this lemma, first we prove the following. We note that the equivalence (i)$\iff$(iii) is a part of Theorem \ref{similar_thm}.

\begin{lemma}\label{Sz.-Nagy_gen_lem}
Consider a power bounded operator $T\notin C_{0\cdot}(\irH)$. Then the following conditions are equivalent:
\begin{itemize}
\item[\textup{(i)}] the inequality $\gamma(A_{T,L}) > 0$ is satisfied for some and then for all Banach limits $L$,
\item[\textup{(ii)}] the compression $T_1 := P_1 T|\irH_0^\perp$ is similar to an isometry, where $P_1$ denotes the orthogonal projection onto the subspace $\irH_0^\perp$,
\item[\textup{(iii)}] the powers of $T$ are bounded from below uniformly on $\irH_0^\perp$, i.e. there exists a constant $c > 0$ such that $c\|x\| \leq \|T^n x\|$ is satisfied on $\irH_0^\perp$ for all $n\in\N$.
\end{itemize}
\end{lemma}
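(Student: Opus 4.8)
The plan is to establish the chain of implications (ii)$\Rightarrow$(iii)$\Rightarrow$(i)$\Rightarrow$(ii), where in this chain (i) is read as the bare statement $\gamma(A_{T,L_0})>0$ for the particular Banach limit $L_0$ at hand; since neither (ii) nor (iii) refers to any Banach limit, closing the chain will automatically produce the "for some and then for all Banach limits" clause in (i). Two of the three implications will be soft, using only facts already available above: $\ker A_{T,L}=\irH_0$ for every Banach limit $L$, and $\langle A_{T,L}Tx,Tx\rangle=\langle A_{T,L}x,x\rangle$ for all $x\in\irH$ (i.e.\ $T^*A_{T,L}T=A_{T,L}$, which is the equality $\|A_{T,L}^{1/2}x\|=\|A_{T,L}^{1/2}Tx\|$ recorded in the introduction). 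The substantive implication is (i)$\Rightarrow$(ii), which I would deduce from the isometric asymptote of $T$.

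For (ii)$\Rightarrow$(iii): write $P_0=I-P_1$ for the orthogonal projection onto $\irH_0$. In the decomposition of Lemma~\ref{Ker_dec_lem} the operator $T$ is upper triangular, which forces $P_1T^n|\irH_0^\perp=T_1^n$, so for $x\in\irH_0^\perp$ one gets $\|T^nx\|^2=\|P_0T^nx\|^2+\|T_1^nx\|^2\geq\|T_1^nx\|^2$. Since $T_1$ is power bounded (a compression of $T$), if $T_1$ is similar to an isometry then the implication (i')$\Rightarrow$(iii') of the reformulation of Sz.-Nagy's theorem, applied to $T_1$, supplies a constant $c>0$ with $\|T_1^nx\|\geq c\|x\|$ on $\irH_0^\perp$ for all $n\in\N$; combining the two inequalities is precisely (iii). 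For (iii)$\Rightarrow$(i): given such a $c$ and any Banach limit $L$, positivity of $L$ yields $\langle A_{T,L}x,x\rangle=\Llim_{n\to\infty}\|T^nx\|^2\geq c^2\|x\|^2$ for every $x\in\irH_0^\perp=(\ker A_{T,L})^\perp$, so the compression of the positive operator $A_{T,L}$ to $(\ker A_{T,L})^\perp$ is bounded below by $c^2$ and $\gamma(A_{T,L})\geq c^2>0$.

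For (i)$\Rightarrow$(ii): fix $L$ with $\gamma:=\gamma(A_{T,L})>0$. From $T^*A_{T,L}T=A_{T,L}$ the form $\langle x,y\rangle_{+}:=\langle A_{T,L}x,y\rangle$ is a $T$-invariant semi-inner product on $\irH$ with null space $\irH_0$; factoring out $\irH_0$ and completing yields a Hilbert space $\irK_{+}$ together with the canonical map $X_{+}\in\irB(\irH,\irK_{+})$ satisfying $\|X_{+}x\|^2=\langle A_{T,L}x,x\rangle$, $\ker X_{+}=\irH_0$ and $\overline{\operatorname{ran}X_{+}}=\irK_{+}$, and an isometry $V_{+}\in\irB(\irK_{+})$ with $X_{+}T=V_{+}X_{+}$ --- the isometric asymptote of $T$. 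Let $\iota\colon\irH_0^\perp\hookrightarrow\irH$ be the inclusion and put $Z:=X_{+}\iota$. The upper-triangular form of $T$ reads $T\iota=\iota T_1+(\text{a map }\irH_0^\perp\to\irH_0)$, and applying $X_{+}$ together with $X_{+}|\irH_0=0$ gives $V_{+}Z=ZT_1$. Because $\|Zx\|^2=\langle A_{T,L}\iota x,\iota x\rangle\geq\gamma\|x\|^2$ for $x\in\irH_0^\perp$, the operator $Z$ is bounded below, hence of closed range; as $\operatorname{ran}Z=\operatorname{ran}X_{+}$ is dense, $Z$ is invertible in $\irB(\irH_0^\perp,\irK_{+})$. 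Therefore $T_1=Z^{-1}V_{+}Z$ is similar to the isometry $V_{+}$, which is (ii).

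The only genuinely nontrivial point is (i)$\Rightarrow$(ii): the hypothesis $\gamma(A_{T,L})>0$ is merely a statement about the $L$-averaged orbit sizes $\Llim_{n\to\infty}\|T^nx\|^2$ on $\irH_0^\perp$, and the isometric asymptote is exactly the device that promotes this averaged datum to a genuine isometry $V_{+}$ intertwining $T_1$; the crux is verifying that $Z$ is invertible, i.e.\ that $\gamma>0$ makes $Z$ bounded below while the construction makes $\operatorname{ran}Z$ dense. If one wishes to avoid the isometric asymptote, (i)$\Rightarrow$(iii) can instead be proved directly --- for $x\in\irH_0^\perp$ and fixed $n$, shift-invariance of $L$ applied to $y=T^nx$ gives $\gamma\|x\|^2\leq\langle A_{T,L}y,y\rangle\leq\|A_{T,L}\|\cdot\|T^nx\|^2$, whence $\|T^nx\|\geq(\sqrt{\gamma}/\sup_k\|T^k\|)\|x\|$ --- but recovering (ii) from (iii) still appears to require the asymptote, so routing everything through (i)$\Rightarrow$(ii) is the economical choice.
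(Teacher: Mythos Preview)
Your proof is correct and follows the same cycle of implications as the paper, with the same ideas in each step. The only cosmetic difference is in (i)$\Rightarrow$(ii): rather than passing through the abstract isometric asymptote and then arguing that $Z$ is invertible, the paper observes directly that $\gamma(A_{T,L})>0$ forces $A_{T,L}=0\oplus A_1$ with $A_1$ invertible on $\irH_0^\perp$, so one may take $X_+=A_{T,L}^{1/2}\colon\irH\to\irH_0^\perp$ concretely (no completion needed) and read off $VA_1^{1/2}=A_1^{1/2}T_1$ by restricting the intertwining relation to $\irH_0^\perp$.
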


\begin{proof} (i)$\Longrightarrow$(ii). We will use the decomposition $A_{T,L} = 0\oplus A_1 \in\irB(\irH_0\oplus\irH_0^\perp)$, where $A_1$ is obviously invertible. Consider the operator $X_+\in\irB(\irH,\irH_0^\perp)$ which is defined by the equation $X_+ h = A_{T,L}^{1/2} h$ $(h\in\irH)$. The equation $\|X_+h\| = \|X_+Th\|$ implies that
\begin{equation}\label{isom_as_eq}
V X_+ = X_+ T
\end{equation}
holds with a unique isometry $V\in\irB(\irH_0^\perp)$. Now, if we restrict (\ref{isom_as_eq}) to the subspace $\irH_0^\perp$, we get the following
\[ V A_1^{1/2} = VX_+|\irH_0^\perp = X_+ T|\irH_0^\perp = A_1^{1/2}T_1, \]
which verifies that the operator $T_1$ is similar to the isometry $V$.

(ii)$\Longrightarrow$(iii). By Lemma \ref{Ker_dec_lem}, we have
\[ T^n = \left(\begin{matrix}
T_0^n & * \\
0 & T_1^n
\end{matrix}\right). \]
Therefore, by Sz.-Nagy's theorem, there exists a constant $c > 0$ for which
\[ \|T^n x\| \geq \|T_1^n x\| \geq c \|x\| \]
holds for each $n\in\N$ and $x\in\irH_0^\perp$.

(iii)$\Longrightarrow$(i). Let $x\in\irH_0^\perp$ be arbitrary, then we have
\[ \|A_{T,L}^{1/2} x\|^2 = \Llim_{n\to\infty} \|T^n x\|^2 \geq c^2 \|x\|^2, \]
which means exactly that $\gamma(A_{T,L}^{1/2}) \geq c$ and hence $\gamma(A_{T,L}) \geq c^2$ is satisfied.
\end{proof}

Second, we prove the following technical lemma.

\begin{lemma}\label{inv_uper_block-triang_lem}
Consider an orthogonal decomposition $\irH = \irK\oplus\irL$, and an invertible operator $X\in\irB(\irH)$. Suppose that the block-matrix of $X$ is
\[ X = \left(\begin{matrix}
X_{11} & X_{12}\\
0 & X_{22}
\end{matrix}\right) \in \irB(\irK\oplus\irL), \]
and the element $X_{11}\in\irB(\irK)$ is surjective. Then the elements $X_{11}\in\irB(\irK)$ and $X_{22}\in\irB(\irL)$ are invertible and the block-matrix form of $X^{-1}$ is the following:
\[ X^{-1} = \left(\begin{matrix}
X_{11}^{-1} & -X_{11}^{-1}X_{12}X_{22}^{-1}\\
0 & X_{22}^{-1}
\end{matrix}\right) \in \irB(\irK\oplus\irL). \]
\end{lemma}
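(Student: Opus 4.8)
The plan is to first establish invertibility of the two diagonal blocks and then simply verify the proposed formula for $X^{-1}$ by direct multiplication.

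First I would prove $X_{11}$ is injective, which combined with the hypothesis that $X_{11}$ is surjective gives invertibility of $X_{11}$ (by the open mapping theorem, a bounded bijection has bounded inverse). Suppose $X_{11}k = 0$ for some $k\in\irK$; viewing $k$ as the element $k\oplus 0\in\irK\oplus\irL$, the upper-triangular form gives $X(k\oplus 0) = X_{11}k\oplus 0 = 0$, so injectivity of $X$ forces $k=0$. Next I would prove $X_{22}$ is invertible. For surjectivity: given $\ell\in\irL$, since $X$ is onto there is $k'\oplus\ell'$ with $X(k'\oplus\ell') = 0\oplus\ell$, and reading off the second coordinate yields $X_{22}\ell' = \ell$. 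For injectivity: if $X_{22}\ell = 0$, then $X(0\oplus\ell) = X_{12}\ell\oplus 0$ lies in $\irK$; since $X_{11}$ is already known to be onto $\irK$, pick $k$ with $X_{11}k = -X_{12}\ell$, and then $X(k\oplus\ell) = (X_{11}k + X_{12}\ell)\oplus X_{22}\ell = 0$, so $k\oplus\ell = 0$ and in particular $\ell=0$. Again the open mapping theorem upgrades this bounded bijection to having bounded inverse, so $X_{22}^{-1}\in\irB(\irL)$; and then $X_{11}^{-1}X_{12}X_{22}^{-1}\in\irB(\irL,\irK)$ makes sense, so the claimed block matrix for $X^{-1}$ is a genuine bounded operator.

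Finally I would denote the proposed operator by
\[
Y = \left(\begin{matrix} X_{11}^{-1} & -X_{11}^{-1}X_{12}X_{22}^{-1}\\ 0 & X_{22}^{-1}\end{matrix}\right)\in\irB(\irK\oplus\irL)
\]
and compute $XY$ and $YX$ by block multiplication. For $XY$ the $(1,1)$ entry is $X_{11}X_{11}^{-1} = I_{\irK}$, the $(2,2)$ entry is $X_{22}X_{22}^{-1} = I_{\irL}$, the $(2,1)$ entry is $0$, and the $(1,2)$ entry is $X_{11}(-X_{11}^{-1}X_{12}X_{22}^{-1}) + X_{12}X_{22}^{-1} = -X_{12}X_{22}^{-1} + X_{12}X_{22}^{-1} = 0$, so $XY = I$; the computation of $YX = I$ is symmetric. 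Hence $X^{-1} = Y$.

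There is essentially no serious obstacle here: the only subtlety is remembering to invoke the open mapping theorem (or the hypothesis that $X^{-1}\in\irB(\irH)$ already, which automatically makes the bounded bijections $X_{11}$, $X_{22}$ have bounded inverses) so that the entries of the claimed inverse are genuinely bounded operators and the block arithmetic is legitimate. Everything else is a routine verification.
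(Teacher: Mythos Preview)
Your proof is correct and follows essentially the same plan as the paper: first show $X_{11}$ is injective (hence bijective) from the injectivity of $X$, then establish that $X_{22}$ is invertible, and finally verify the block formula by multiplication. The only minor difference is that the paper deduces the invertibility of $X_{22}$ by writing $X^{-1}$ with unknown block entries and reading off the $(2,2)$-entries of $XX^{-1}=I$ and $X^{-1}X=I$, whereas you argue directly with vectors; both routes are routine.
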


\begin{proof}
Let $X^{-1} = \left(\begin{matrix}
Y_{11} & Y_{12}\\
Y_{21} & Y_{22}
\end{matrix}\right) \in \irB(\irK\oplus\irL)$. Since $X$ is invertible, $X_{11}$ has to be injective, thus bijective. The (2,1)-element of the block-matrix decomposition of $X^{-1}X = I$ is $Y_{21}X_{11} = 0 \in\irB(\irK,\irL)$ which gives us $Y_{12} = 0$. The (2,2)-elements of $XX^{-1} = I$ and $X^{-1}X = I$ are $X_{22}Y_{22} = I \in\irB(\irL)$ and $Y_{22}X_{22} = I \in\irB(\irL)$, respectively, which imply the invertibility of $X_{22} \in \irB(\irL)$. Finally, an easy calculation verifies the block-matrix form of $X^{-1}$.
\end{proof}

Now we are in a position to present our proof for Theorem \ref{similar_thm}. We note that for any power bounded operator $T\in\irB(\irH)$ and unitary operator $U\in\irB(\irH)$ the equation 
\begin{equation}\label{L-as_lim_uni_eq}
A_{UTU^*,L} = UA_{T,L}U^* 
\end{equation}
holds. In fact, this can be verified directly from the definition of the $L$-asymptotic limit.

\begin{proof}[Proof of Theorem \ref{similar_thm}] We begin with the first part. Let $X\in\irB(\irH)$ be an invertible operator for which $S = XTX^{-1}$ holds. It is easy to see that $\irH_0(S) = X\irH_0(T)$, which gives us $\dim\irH_0(T) = \dim\irH_0(S)$ and $\dim\irH_0(T)^\perp = \dim\irH_0(S)^\perp$. Therefore we can choose a unitary operator $U\in\irB(\irH)$ such that the equation
\begin{equation}\label{sim_pwb_stabel_spc_eq}
\irH_0(T) = U \irH_0(S) = U X \irH_0(T) = \irH_0(USU^*)
\end{equation}
is valid. By (\ref{L-as_lim_uni_eq}), it is enough to prove the inequality 
\[\gamma(A_{USU^*,L}) > 0.\]

Now, consider the block-matrix decompositions (\ref{Ker_dec_eq}) and
\[ UX = \left(\begin{matrix}
Y_{11} & Y_{12}\\
0 & Y_{22}
\end{matrix}\right) \in \irB(\irH_0(T)\oplus(\irH_0(T))^\perp). \]
The latter one is indeed upper block-triangular and moreover, the element $Y_{11}$ is surjective, because of (\ref{sim_pwb_stabel_spc_eq}). Therefore by Lemma \ref{inv_uper_block-triang_lem} we obtain the equation
\[ (UX)^{-1} = \left(\begin{matrix}
Y_{11}^{-1} & -Y_{11}^{-1}Y_{12}Y_{22}^{-1}\\
0 & Y_{22}^{-1}
\end{matrix}\right) \in \irB(\irH_0(T)\oplus(\irH_0(T))^\perp). \]
An easy calculation gives the following:
\[ P_1 USU^*|(\irH_0(T))^\perp = P_1(UX)T(UX)^{-1}|(\irH_0(T))^\perp = Y_{22}T_1Y_{22}^{-1}, \]
where $P_1$ denotes the orthogonal projection onto the subspace $(\irH_0(T))^\perp$. Now, if the inequality $\gamma(A_{T,L}) > 0$ holds, then by Lemma \ref{Sz.-Nagy_gen_lem} the operator $T_1$ is similar to an isometry. But this gives that the compression $P_1 USU^*|(\irH_0(T))^\perp$ is also similar to an isometry, and hence by Lemma \ref{Sz.-Nagy_gen_lem} and (\ref{sim_pwb_stabel_spc_eq}) we get that $\gamma(A_{S,L}) > 0$ holds.

The second part was proven in Lemma \ref{Sz.-Nagy_gen_lem}. 

The third part is an easy consequence of the fact that the asymp\-totic limit of a power bounded normal operator $N$ is always idempotent.
\end{proof}

Next we prove a consequence of Theorem \ref{similar_thm}. We recall definitions of some special classes of operators to which the similarity will be investigated in the forthcoming corollary. The operator $T\in\irB(\irH)$ is said to be 
\begin{itemize}
\item \emph{of class $Q$} if $\|Tx\|^2 \leq \frac{1}{2}(\|T^2x\|^2+\|x\|^2)$ holds for every $x\in\irH$,
\item \emph{log-hyponormal} if $\log(T^*T) \geq \log(TT^*)$ is satisfied.
\end{itemize}
An operator $T$ is called \emph{paranormal} if $\|T x\|^2 \leq \|T^2 x\|\|x\|$ is valid for all $x\in\irH$. It is quite easy to verify from the arithmetic-geometric mean inequality that every paranormal operator is of class $Q$ as well. 

We say that the operator $T$ has the \emph{Putnam--Fuglede property} (or \emph{PF property} for short) if for any operator $X\in\irB(\irH,\irK)$ and isometry $V\in\irB(\irK)$ for which $TX = XV^*$ holds, the equation $T^*X = XV$ is satisfied as well.

\begin{corollary}\label{exotic_cor}
For a power bounded operator $T\in\irB(\irH)$ and a Banach limit $L$ the following implications are valid:
\begin{itemize}
\item[\textup{(i)}] if $T\notin C_{\cdot 0}(\irH)$ is similar to a power bounded operator that has the PF property, then the condition $\gamma(A_{T^*,L}) > 0$ is fulfilled.
\item[\textup{(ii)}] if $T\notin C_{\cdot 0}(\irH)$ is similar to an operator that is either log-hyponormal or of class $Q$ or paranormal, then the inequality $\gamma(A_{T^*,L}) > 0$ is satisfied.
\end{itemize}
\end{corollary}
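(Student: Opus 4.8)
The plan is to reduce both parts to proving $\gamma(A_{S^*,L})>0$, where $S$ denotes the operator to which $T$ is similar, and then to run the isometric-asymptote construction from the proof of Lemma~\ref{Sz.-Nagy_gen_lem}. Writing $S=XTX^{-1}$ with $X\in\irB(\irH)$ invertible, we get $S^*=(X^*)^{-1}T^*X^*$, so $T^*$ and $S^*$ are similar power bounded operators, and (exactly as in the proof of Theorem~\ref{similar_thm}) $\irH_0(S^*)=(X^*)^{-1}\irH_0(T^*)$. Since the hypothesis $T\notin C_{\cdot 0}(\irH)$ says precisely that $\irH_0(T^*)\neq\irH$, it follows that $\irH_0(S^*)\neq\irH$, i.e. $T^*,S^*\notin C_{0\cdot}(\irH)$. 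The first part of Theorem~\ref{similar_thm}, applied to the pair $T^*,S^*$, then shows that it suffices to prove $\gamma(A_{S^*,L})>0$.

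For (i), put $\irM_1:=\irH_0(S^*)^\perp$ and, exactly as in the proof of Lemma~\ref{Sz.-Nagy_gen_lem} but with $S^*$ in place of $T$, let $X_+\in\irB(\irH,\irM_1)$ be $X_+h=A_{S^*,L}^{1/2}h$; shift-invariance of $L$ gives $\|X_+h\|=\|X_+S^*h\|$, hence $X_+S^*=VX_+$ for a unique isometry $V\in\irB(\irM_1)$. Taking adjoints yields $S\,X_+^*=X_+^*V^*$, which is an intertwining relation of the precise form to which the PF property of $S$ applies (with isometry $V$); the PF property then gives $S^*X_+^*=X_+^*V$, and one more adjoint produces the reverse relation $X_+S=V^*X_+$. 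Now $\irM_1$ is $S$-invariant (since $\irH_0(S^*)$ is hyperinvariant for $S^*$), and because $\ker A_{S^*,L}=\irH_0(S^*)$ we may write $A_{S^*,L}=0\oplus A_1$ on $\irH_0(S^*)\oplus\irM_1$ with $A_1$ positive and injective on $\irM_1$; thus $X_+$ vanishes on $\irH_0(S^*)$ and equals $A_1^{1/2}$ on $\irM_1$. Writing $(S^*)_1=P_1S^*|\irM_1$ for the compression (so that $(S^*)_1^*=S|\irM_1$) and restricting the two intertwining identities to $\irM_1$, we obtain
\[
A_1^{1/2}(S^*)_1=VA_1^{1/2}\qquad\text{and}\qquad A_1^{1/2}(S^*)_1^*=V^*A_1^{1/2}.
\]
Right-multiplying the second equality by $(S^*)_1$ and using the first together with $V^*V=I$ gives $A_1^{1/2}\big((S^*)_1^*(S^*)_1-I\big)=0$, so the injectivity of $A_1^{1/2}$ on $\irM_1$ forces $(S^*)_1$ to be an isometry. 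Being (trivially) similar to an isometry, Lemma~\ref{Sz.-Nagy_gen_lem}, applied to $S^*$ (which is not of class $C_{0\cdot}$), yields $\gamma(A_{S^*,L})>0$, hence $\gamma(A_{T^*,L})>0$, proving (i).

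For (ii) it suffices to know that a power bounded operator in each of the three listed classes has the PF property, for then (i) applies. A power bounded class-$Q$ operator $S$ is automatically a contraction: with $b_n=\|S^nx\|^2$ the defining inequality gives $2b_{n+1}\le b_{n+2}+b_n$, so the increments $b_{n+1}-b_n$ are non-decreasing and, being bounded, non-positive, whence $\{b_n\}$ is non-increasing and $\|S\|\le1$. As log-hyponormal and paranormal operators are of class $Q$, and operators in all three classes are known to possess the PF property (see \cite{DuKu} and the references therein), part (ii) follows from (i). The only step I expect to require real care is the opening move of (i): feeding the asymptotic intertwining $X_+S^*=VX_+$ into the PF property in the right form so as to produce the reverse relation $X_+S=V^*X_+$. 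Once that is in hand the rest is the short computation above, and the genuinely new ingredient is the combination with Theorem~\ref{similar_thm}.
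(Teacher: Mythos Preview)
Your argument for (i) is correct but takes a genuinely different route from the paper. The paper invokes Pagacz's structural characterization \cite[Theorem~3.2]{Pa_PF}: a power bounded operator has the PF property if and only if it is an orthogonal sum $U\oplus S_0$ with $U$ unitary and $S_0\in C_{\cdot 0}(\irH)$. From this decomposition one reads off $A_{S^*,L}=I\oplus 0$, so $\gamma(A_{S^*,L})>0$ is immediate, and Theorem~\ref{similar_thm} (applied to $T^*,S^*$) finishes. You instead feed the isometric-asymptote intertwining $X_+S^*=VX_+$ directly into the \emph{definition} of the PF property (via the adjoint identity $SX_+^*=X_+^*V^*$) to obtain the reverse relation $X_+S=V^*X_+$, and a short computation then shows the compression $(S^*)_1$ is itself an isometry, so Lemma~\ref{Sz.-Nagy_gen_lem} applies. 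Your approach is more self-contained, using only the PF hypothesis as stated rather than its structural characterization, at the cost of a slightly longer computation; the paper's is shorter but imports a nontrivial external theorem.

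For (ii) both proofs reduce to (i) by checking the PF property, and your contraction argument for class~$Q$ matches the paper's. One point needs care: you assert that log-hyponormal operators are of class~$Q$, but you give no justification, and the paper does not take this route. The paper handles the log-hyponormal case separately, citing Mecheri \cite{Me} for the PF property directly (no passage through class~$Q$ or contractivity is needed). Your implication log-hyponormal $\Rightarrow$ paranormal $\Rightarrow$ class~$Q$ is in fact known, but it requires a reference; alternatively, simply invoke \cite{Me} for that case as the paper does. Also, the citation \cite{DuKu} does not cover the PF property for these classes; the paper cites \cite{Me}, \cite{Pa_Wold}, and \cite{Ok}.
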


\begin{proof}
Theorem 3.2 of \cite{Pa_PF} tells us that the PF property for a power bounded operator $T$ is equivalent to the condition that $T$ is the orthogonal sum of a unitary and a power bounded operator of class $C_{\cdot 0}$. Therefore (i) is an easy consequence of Pagacz's result and Theorem \ref{similar_thm}.

If $T$ is log-hyponormal, then Mecheri's result (see \cite{Me}) implies that $T$ has the PF property, and thus $\gamma(A_{T*,L}) > 0$ holds. 

Finally, let us assume that $T$ is a power bounded operator which also belongs to the class $Q$. We prove that then it is a contraction as well. If $\|Tx\|^2-\|x\|^2 > a > 0$ held for a vector $x\in\irH$, then we would obtain $\|T^2x\|^2-\|Tx\|^2 \geq \|Tx\|^2-\|x\|^2 > a$. By induction we could prove that $\|T^{n+1}x\|^2-\|T^nx\|^2 > a$ would hold for every $n\in\N$. Therefore the inequality $\|T^{n+1}x\|^2-\|x\|^2 > n\cdot a$ would be true, which would imply that $T$ could not be power bounded. Consequently, $T$ has to be a contraction. P. Pagacz showed that a contraction which belongs to the class $Q$, shares the PF property (see \cite{Pa_Wold} and \cite{Ok} for the paranormal case). This gives us that $\gamma(A_{T*,L}) > 0$ is valid, which completes our proof.
\end{proof}

Let us consider an arbitrary operator $B\in\irB(\irH)$ with $\|B\|<1$ and the identity operator $I$ on $\irH$. 
Obviously we have $\gamma(A_{(I\oplus B)^*}) = 1 > 0$, but usually $I\oplus B$ does not share the PF property nor it is a log-hyponormal operator or of class $Q$. 
Thus the points of Corollary \ref{exotic_cor} cannot be equivalent conditions. The same is true for the last part of Theorem \ref{similar_thm}.

We close this section with an application of Lemma \ref{Sz.-Nagy_gen_lem}. If we have an orthonormal base $\{e_n\}_{n=-\infty}^\infty$ in $\irH$ and a bounded two-sided sequence $\{w_k\}_{k\in\Z} \subseteq \C$, then the operator $T\in\irB(\irH)$ is called a \emph{weighted bilateral shift operator} if $T e_k = w_k e_{k+1}$ holds for all $k\in\Z$. It is easy to see that if the weighted bilateral shift operator $T$ is power bounded, then the $L$-asymptotic limit satisfies the equation $A_{T,L} e_k = \big(\Llim_{n\to\infty} \prod_{j=0}^{n} |w_{k+j}|^2\big)\cdot e_k$ for every $k\in\Z$. A weighted bilateral shift operator $T$ is power bounded if and only if the inequality 
\[ \sup \Big\{ \prod_{j=0}^{n} |w_{k+j}| \colon k\in\Z, n\in\N\cup\{0\} \Big\} < \infty \]
is fulfilled  (see \cite[Proposition 2]{Sh}). By Sz.-Nagy's theorem $T$ is similar to a unitary operator exactly when it is power bounded and in addition the following holds:
\[ \inf \Big\{ \prod_{j=0}^{n} |w_{k+j}| \colon k\in\Z, n\in\N\cup\{0\} \Big\} > 0. \]

\begin{corollary}\label{shift_cor}
Let $I$ be an arbitrary set of indices. Consider the orthogonal sum $W = \oplus_{i\in I} W_i \in \irB(\oplus_{i\in I}\irH_i)$ which is power bounded, and each summand $W_i$ is a weighted bilateral shift operator that is similar to a unitary operator. If $W$ is similar to a normal operator, then necessarily it is similar to a unitary operator.
\end{corollary}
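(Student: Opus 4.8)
The plan is to deduce the statement from the reformulation of Sz.-Nagy's theorem, by showing that $W$ is of class $C_{11}$ and that both asymptotic limits $A_{W,L}$ and $A_{W^*,L}$ are invertible. The extra hypothesis that $W$ is similar to a normal operator will enter only at one point, through Theorem \ref{similar_thm}.

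First I would extract what the assumption on the summands gives. Fix $i\in I$, and let $\{e^{(i)}_k\}_{k\in\Z}$ and $\{w^{(i)}_k\}_{k\in\Z}$ be the orthonormal base and the weight sequence of $W_i$. Since $W_i$ is power bounded (it is the restriction of $W$ to a reducing subspace), its $L$-asymptotic limit is the diagonal operator given by $A_{W_i,L}e^{(i)}_k=\big(\Llim_{n\to\infty}\prod_{j=0}^{n}|w^{(i)}_{k+j}|^2\big)e^{(i)}_k$, and because $W_i$ is similar to a unitary operator we have $\delta_i:=\inf\{\prod_{j=0}^{n}|w^{(i)}_{k+j}|\colon k\in\Z,\ n\in\N\cup\{0\}\}>0$, whence every diagonal entry of $A_{W_i,L}$ is $\geq\delta_i^2>0$; in particular $\ker A_{W_i,L}=\irH_0(W_i)=\{0\}$. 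Moreover $W_i^*$ is again a weighted bilateral shift (with respect to the reversed base) which is similar to a unitary operator, so the same argument gives $\irH_0(W_i^*)=\{0\}$.

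Second I would check that the orthogonal sum does not enlarge the stable subspace. Uniting the bases $\{e^{(i)}_k\}_{k\in\Z}$ over $i\in I$ yields an orthonormal base of $\bigoplus_{i\in I}\irH_i$ in which $A_{W,L}$ is diagonal, its diagonal entries being exactly the (all strictly positive) diagonal entries of the operators $A_{W_i,L}$; hence $A_{W,L}$ is injective, i.e.\ $\irH_0(W)=\ker A_{W,L}=\{0\}$, and the same reasoning applied to $W^*=\bigoplus_{i\in I}W_i^*$ gives $\irH_0(W^*)=\{0\}$. Thus $W\in C_{11}(\bigoplus_{i\in I}\irH_i)$. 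Now $W\notin C_{0\cdot}$ and $W$ is similar to a normal operator, so Theorem \ref{similar_thm} yields $\gamma(A_{W,L})>0$ and $\gamma(A_{W^*,L})>0$; since these positive operators have trivial kernel, $\gamma(A_{W,L})=\inf\{\|A_{W,L}x\|\colon\|x\|=1\}$, so $A_{W,L}$ is bounded below, hence invertible, and likewise $A_{W^*,L}$ is invertible. (Equivalently, by Lemma \ref{Sz.-Nagy_gen_lem} the conditions $\gamma(A_{W,L})>0$, $\irH_0(W)=\{0\}$ say that $W$ itself is similar to an isometry, and the same for $W^*$.) As $W$ is power bounded with both $A_{W,L}$ and $A_{W^*,L}$ invertible, the reformulation of Sz.-Nagy's theorem gives that $W$ is similar to a unitary operator.

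The whole argument is short once Theorem \ref{similar_thm} is available, and the only step that requires a moment's attention is the passage to the infinite orthogonal sum in the second paragraph---one must rule out that an infinite direct sum of $C_{1\cdot}$-operators leaves the class $C_{1\cdot}$. The weighted bilateral shift structure of the summands is precisely what makes this transparent, since it keeps all the asymptotic limits diagonal with strictly positive diagonal; for general summands one would instead invoke the uniform power bound of $W$ together with an $\varepsilon$-splitting of the index set $I$. Everything else reduces to direct applications of Theorem \ref{similar_thm} and of Sz.-Nagy's theorem.
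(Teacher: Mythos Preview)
Your proof is correct and follows essentially the same route as the paper's: both arguments compute that $A_{W,L}=\bigoplus_{i\in I}A_{W_i,L}$ is diagonal with strictly positive entries (hence injective), and then invoke Theorem~\ref{similar_thm} to upgrade injectivity to invertibility. The only cosmetic difference is in the final step: the paper writes down the explicit intertwining $A_i^{1/2}W_i=S_iA_i^{1/2}$ with simple bilateral shifts $S_i$ and assembles these into a global similarity $W=(\bigoplus A_i)^{-1/2}(\bigoplus S_i)(\bigoplus A_i)^{1/2}$, whereas you appeal abstractly to condition~(ii) of the reformulated Sz.-Nagy theorem, which obliges you to also check the $W^*$ side. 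Both conclusions are immediate once $A_{W,L}$ is known to be invertible; the paper's version has the slight advantage of exhibiting the unitary explicitly, while yours avoids the construction at the cost of a symmetric repetition.
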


\begin{proof} Let us denote the $L$-asymptotic limit of $W_i$ by $A_i$. Since the subspaces $\irH_i$ are invariant for the operators $W^{*n}W^n$ ($i\in I, n\in\N$), we obtain the equation $A_{W,L} = \oplus_{i\in I} A_i$. Since each summand $W_i$ is similar to a unitary operator, the operator $A_i$ is invertible and $A_i^{1/2}W_i = S_iA_i^{1/2}$ holds for every $i\in I$ where $S_i$ denotes a simple (i.\,e.\,unweighted) bilateral shift operator. From the power boundedness of $W$, $\sup\{\|A_i\|\colon i\in I\}<\infty$ follows.

On the one hand, if $\sup\{ \|A_i^{-1}\| \colon i\in I\} < \infty$ is satisfied, then the equation 
\[ W = (\oplus_{i\in I} A_i)^{-1/2}(\oplus_{i\in I} S_i) (\oplus_{i\in I} A_i)^{1/2}\] 
gives that $W$ is similar to a unitary operator. On the other hand, if the inequality fails, then $A_{W,L} = \oplus_{i\in I} A_i$ is not invertible, but injective. By point (i) of Theorem of \ref{similar_thm}, we obtain that in this case $W$ cannot be similar to any normal operator.
\end{proof}


\section{Strengthening of Sz.-Nagy's theorem for contractions}\label{contr_sim_to_U_sec}

We begin this section by proving Theorem \ref{similar_to_unitary_thm}. We say that a subspace $\irL\subseteq\irH$ is reducible for an operator $T\in\irB(\irH)$ if $\irL$ is $T$- and $T^*$-invariant.

\begin{proof}[Proof of Theorem \ref{similar_to_unitary_thm}]
Since $A_T$ is invertible, the inequality $\underline{r} := \underline{r}(A_T) > 0$ is satisfied. It is trivial that if $\underline{r} = 1$, then $A_T = I$, and in this case the statement of the theorem is obviously true. Therefore we may suppose that $\underline{r} < 1$.

We will use the notation $\irM = \ker(A_T-\underline{r} I)$. Assume that the condition $0 < \dim\irM < \infty$ holds. If we set an arbitrary vector $h\in\irM$, then we have
\begin{equation}\label{T_inv_inv_eq}
\underline{r}^{1/2} \|h\| = \|A_T^{1/2} h\| = \|A_T^{1/2} T^{-1} h\| \geq \underline{r}^{1/2} \|T^{-1} h\|
\end{equation}
which implies that the inequality $\|T^{-1} h\| \leq \|h\|$ is fulfilled for every $h\in\irM$. But $T$ is a contraction, therefore $\|T^{-1} h\| = \|h\|$ is fulfilled for every $h\in\irM$. Because of the latter equation and (\ref{T_inv_inv_eq}) we deduce $\|A_T^{1/2} T^{-1} h\| = \underline{r}^{1/2} \|T^{-1} h\|$ which implies that the finite-dimensional subspace $\irM$ is invariant for the operator $T^{-1}$. Since $T$ is bijective, we get that $T^{-1}\irM = \irM$ is fulfilled and the restriction $T|\irM$ is unitary. Since $T$ is a contraction, this implies that $\irM$ is reducing for $T$. On the other hand, $\underline{r} I|\irM = A_T|\irM = I|\irM$ follows from this which is a contradiction.
\end{proof}

Before proving Theorem \ref{similar_to_unitary_rev_thm}, we need the following lemma. We note that the method which will be used here is similar to the one which was used in Section 3 of \cite{Ge_contr}. There operator-weighted unilateral shifts were used and here we use operator-weighted bilateral shifts. This will result in some further complications.

\begin{lemma}\label{rev_lem}
Suppose we have a positive, invertible contraction $A\in\irB(\irH)$, an orthogonal decomposition $\irH = \oplus_{k=-\infty}^\infty \irY_k$ where the subspaces $\irY_k$ are reducing for $A$, and a unitary operator $U\in\irB(\irH)$ such that the following conditions hold:
\begin{itemize}
\item[\textup{(i)}] the equation $U\irY_k = \irY_{k+1}$ is satisfied for all $k\in\Z$,
\item[\textup{(ii)}] we have $\lim_{k\to\infty} \underline{r}(A|\irY_k) = 1$, and
\item[\textup{(iii)}] the inequality $\|A^{1/2} y_k\| \leq \|A^{1/2} U y_k\|$ is fulfilled for every $k\in\Z$ and $y_k\in\irY_k$.
\end{itemize}
Then $T := A^{-1/2}UA^{1/2} \in \irB(\irH)$ is a contraction for which $A_T = A$ holds.
\end{lemma}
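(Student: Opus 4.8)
The plan is to prove the two assertions separately, exploiting throughout that the decomposition $\irH=\oplus_{k\in\Z}\irY_k$ simultaneously block-diagonalizes $A$ (since each $\irY_k$ reduces $A$, hence also $A^{1/2}$ and $A^{-1/2}$) and is shifted by $U$ (by (i), $U^n\irY_k=\irY_{k+n}$ for every $n\in\N$). I write $A_k:=A|\irY_k$, a positive invertible contraction on $\irY_k$ with $\|A_k^{-1}\|=\underline{r}(A_k)^{-1}$, and $U_k:=U|\irY_k\colon\irY_k\to\irY_{k+1}$, which is unitary. Note also that $T=A^{-1/2}UA^{1/2}$ is bounded because $A$ is invertible, and that $T^n=A^{-1/2}U^nA^{1/2}$.

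\emph{Step 1: $T$ is a contraction.} Since $T^*T=A^{1/2}U^*A^{-1}UA^{1/2}$ and $A^{-1/2}$ is bounded and invertible, $\|T\|\le1$ is equivalent to $U^*A^{-1}U\le A^{-1}$. Both operators here leave every $\irY_k$ invariant — for $U^*A^{-1}U$ because $U\irY_k=\irY_{k+1}$, $A^{-1}\irY_{k+1}=\irY_{k+1}$ and $U^*\irY_{k+1}=\irY_k$ — so the inequality is equivalent to $U_k^*A_{k+1}^{-1}U_k\le A_k^{-1}$ on $\irY_k$ for every $k$. Condition (iii) states exactly that $A_k\le U_k^*A_{k+1}U_k$ on $\irY_k$; as both sides are positive and invertible on $\irY_k$ and operator inversion is order-reversing, $U_k^*A_{k+1}^{-1}U_k=(U_k^*A_{k+1}U_k)^{-1}\le A_k^{-1}$, which finishes this step. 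Consequently $\{T^{*n}T^n\}_n$ is decreasing and $A_T=\lim_{n\to\infty}T^{*n}T^n$ exists in the strong operator topology, with $\langle A_Tx,x\rangle=\lim_{n\to\infty}\|T^nx\|^2$.

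\emph{Step 2: $A_T=A$.} As $A_T$ and $A$ are self-adjoint, it suffices to show $\lim_{n\to\infty}\|T^nx\|^2=\langle Ax,x\rangle$ for every $x\in\irH$. Writing $z=A^{1/2}x$ (which ranges over all of $\irH$ since $A^{1/2}$ is invertible) and using $T^n=A^{-1/2}U^nA^{1/2}$, this becomes $\lim_{n\to\infty}\|A^{-1/2}U^nz\|^2=\|z\|^2$. The inequality ``$\ge$'' is immediate: $A\le I$ forces $A^{-1}\ge I$, so $\|A^{-1/2}U^nz\|^2=\langle A^{-1}U^nz,U^nz\rangle\ge\|U^nz\|^2=\|z\|^2$. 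For ``$\le$'', first take $z\in\irY_j$: then $U^nz\in\irY_{j+n}$, so $\|A^{-1/2}U^nz\|^2\le\|A_{j+n}^{-1}\|\,\|z\|^2=\underline{r}(A_{j+n})^{-1}\|z\|^2\to\|z\|^2$ by (ii). For an arbitrary $z=\sum_k z_k$ with $z_k\in\irY_k$, fix $\varepsilon>0$ and choose a finite $F\subset\Z$ with $\|z-z_F\|<\varepsilon$ where $z_F=\sum_{k\in F}z_k$; using the orthogonality of the subspaces $\irY_{k+n}$ ($k\in F$),
\[
\|A^{-1/2}U^nz_F\|^2=\sum_{k\in F}\|A^{-1/2}U^nz_k\|^2\le\Big(\max_{k\in F}\underline{r}(A_{k+n})^{-1}\Big)\|z_F\|^2,
\]
and the $\limsup$ over $n$ of the right-hand side is at most $\|z_F\|^2\le\|z\|^2$, since $F$ is finite and, by (ii), $\underline{r}(A_{k+n})^{-1}\to1$ for each $k$ as $n\to\infty$. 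Combining this with $\|A^{-1/2}U^n(z-z_F)\|\le\|A^{-1/2}\|\,\varepsilon$ and letting $\varepsilon\to0$ yields $\limsup_{n\to\infty}\|A^{-1/2}U^nz\|\le\|z\|$. With the lower bound this gives the claimed limit, hence $\langle A_Tx,x\rangle=\langle Ax,x\rangle$ for all $x$, i.e.\ $A_T=A$.

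The routine core of the argument is the blockwise order-reversal of inversion in Step 1 and the elementary norm estimates in Step 2; the one point that requires care is the truncation of $z$ to finitely many summands in Step 2, which is precisely what lets condition (ii) be invoked uniformly over a finite index set while the boundedness of $A^{-1/2}$ controls the discarded tail.
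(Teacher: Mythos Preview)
Your proof is correct and follows essentially the same approach as the paper: both use condition (iii) together with the block structure to verify that $T$ is a contraction, and condition (ii) to show $T^{*n}T^n\to A$ blockwise, then pass to all of $\irH$ by density. The paper's execution is a bit more economical---it checks $\|T^*y_k\|\le\|y_k\|$ in one line from (iii), and in the second step bounds $\|(T^{*n}T^n-A)y_k\|\le\|A^{1/2}\|\cdot\|(A^{-1}-I)|\irY_{k+n}\|\cdot\|A^{1/2}y_k\|$ directly, so that the reducing property of the $\irY_k$ for $T^{*n}T^n$ (plus uniform boundedness) replaces your explicit truncation argument---but the underlying ideas are identical.
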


\begin{proof}
By (iii) we obtain 
\[ \|T^* y_k\| = \|A^{1/2}U^*A^{-1/2} y_k\| \leq \|A^{1/2}UU^*A^{-1/2} y_k\| = \|y_k\|, \]
which gives that $T$ is a contraction.

Consider an arbitrary vector $y_k\in\irY_k$ ($k\in\Z$), the following inequality holds for any $\varepsilon > 0$ choosing $n$ large enough:
\[ \| T^{*n}T^n y_k - A y_k \| = \| A^{1/2}U^{-n}(A^{-1}-I)U^nA^{1/2} y_k \| \]
\[ \leq \| A^{1/2}\| \cdot \|(A^{-1}-I)|\irY_{k+n}\| \cdot \|A^{1/2} y_k\| \leq \varepsilon \cdot \|y_k\|. \]
This shows that $T^{*n}T^n y_k \to A y_k$ holds for every vector $y_k\in\irY_k$ and number $k\in\Z$. But $\irY_k$ is reducing for the operator $T^{*n}T^n$ ($k\in\Z, n\in\N$) which implies that the equation $A_T = A$ is valid.
\end{proof}

Now, we are in a position to present our proof concerning Theorem \ref{similar_to_unitary_rev_thm}. For two real numbers $a < b$ the symbol $]a,b[$ will stand for the open interval with endpoints $a$ and $b$, by $[a,b[$ and $]a,b]$ we will denote the half open-closed intervals, and $[a,b]$ will refer to the closed interval.

\begin{proof}[Proof of Theorem \ref{similar_to_unitary_rev_thm}] Throughout the proof $\irH_A(\omega)$ denotes the spectral subspace of $A$ associated with the Borel subset $\omega\subseteq\R$. Clearly if $T\in\irB(\irH)$ is a contraction, then $A_{T\oplus I} = A_T\oplus A_I = A_T\oplus I \in\irB(\irH\oplus\irH')$. It is also obvious that the conditions in the statement of the theorem concerning $A$ and $A\oplus I$ are simultaneously satisfied whenever the summand $I$ acts on a finite dimensional space. Therefore throughout the proof we shall assume without loss of generality that $\dim\ker(A-I) \in \{0,\aleph_0\}$ and $\underline{r}:=\underline{r}(A) < 1$. We choose an arbitrary two-sided sequence $\{a_k\}_{k=-\infty}^\infty \subseteq ]\underline{r},1[$ such that $a_k < a_{k+1}$ $(k\in\Z)$, $\lim_{k\to -\infty} a_k = \underline{r}$ and $\lim_{k\to \infty} a_k = 1$ are satisfied. We set 
\[
\irX_k := \irH_A([a_k,a_{k+1}[) \quad (k\in\Z),
\]
\[
\irM :=  \ker(A-\underline{r}I) = \sum_{k=-\infty}^{-1} \oplus \irM_k, \; \dim \irM_k = \dim\irM \in\{0,\aleph_0\}\; (k < 0)
\]
and
\[
\irN := \ker(A-I) = \sum_{k=1}^\infty \oplus \irN_k, \; \dim \irN_k = \dim\irN \in\{0,\aleph_0\}\; (k > 0). 
\]
According to the possibilities whether there are infinitely many positive/negative indices such that $\dim\irX_k = \aleph_0$ holds, we may assume, by choosing an appropriate subsequence if necessary, that the following conditions hold:
\begin{itemize}
\item $\dim\irX_k < \aleph_0$ for every $k>0$ or $\dim\irX_k = \aleph_0$ for all $k>0$, and
\item $\dim\irX_k < \aleph_0$ for every $k<0$ or $\dim\irX_k = \aleph_0$ for all $k<0$.
\end{itemize}
Our aim is to apply Lemma \ref{rev_lem}.

First we define the $\irY_k$ subspaces for positive indices and the unitary operator $U$ on these subspaces. There are two different cases.\\
Case 1. If $\dim\irX_k = \aleph_0$ for all $k > 0$, then we set $\irY_k = \irX_k\oplus\irN_k \; (k>0)$ and define $U$ on these subspaces in such a way that $U\irN_k = \irN_{k+1}$ and $U\irX_k = \irX_{k+1}\; (k > 0)$ are valid. \\
Case 2. If $\dim\irX_k < \aleph_0$ for all $k > 0$, then there exists an orthonormal base $\{e_{k,l}\colon k>0,l>0\}$ in $\irH([a_1,1])$ such that $A e_{k,l} = \alpha_{k,l} e_{k,l} \; (k,l>0)$ is fulfilled with some positive numbers $\{\alpha_{k,l}\colon k>0,l>0\}$ where $\alpha_{k,l} \leq \alpha_{k+1,l}$ holds for any $k,l\in\N$ and $\lim_{k\to\infty} \inf\{\alpha_{k,l}\colon l\in\N\} = 1$. 
We define $\irY_k = \vee\{e_{k,l}\colon l\in\N\}$ and $U e_{k,l} = e_{k+1,l} \; (k,l > 0)$.\\
Let us point out that $\dim\irY_k = \aleph_0$ $(k<0)$ holds.

Second, we do the same for non-positive indices. Here we have three different cases. We note that, as we shall see, in every case the condition $\dim\irY_k = \aleph_0$ $(k\geq 0)$ is fulfilled.\\
Case 1. If $\dim\irX_k = \aleph_0$ for all $k < 0$, then let
\[
\irY_k = \left\{
\begin{matrix}
\irX_{k-1}\oplus\irM_{k-1} & \text{if } k < 0\\
\irX_{-1}\oplus\irM_{-1}\oplus\irX_0 & \text{if } k = 0
\end{matrix}
\right.
\]
and define $U$ on these subspaces in a way such that $U\irX_{k-1} = \irX_{k}, U\irM_{k-1} = \irM_{k} \; (k < -1)$, $U\irX_{-2} = \irX_{-1}\oplus\irX_{0}$, $U\irM_{-2} = \irM_{-1}$ and $U\irY_0 = \irY_1$. \\
Case 2. If $\dim\irX_k < \aleph_0$ for all $k \leq 0$, then we can find an orthonormal base $\{e_{k,l}\colon k\leq 0,l>0\}$ in $\irH([\underline{r},a_1[)$ such that $A e_{k,l} = \alpha_{k,l} e_{k,l} \; (k\leq 0,l>0)$ is satisfied with some positive numbers $\{\alpha_{k,l}\colon k\leq 0,l>0\}$ where $\alpha_{k-1,l} \leq \alpha_{k,l}$ holds for any $k\leq 0,l>0$. We set $\irY_k = \vee\{e_{k,l}\colon l\in\N\}$ and $U e_{k-1,l} = e_{k,l} \; (k\leq 0,l > 0)$, $U\irY_0 = \irY_1$. \\
Case 3. In case when $\dim\irX_k < \aleph_0$ for all $k < 0$ and $\dim\irX_0 = \aleph_0$, then we can find an orthonormal base $\{e_{k,l}\colon k< 0,l>0\}$ in $\irH([\underline{r},a_0[)$ such that $A e_{k,l} = \alpha_{k,l} e_{k,l} \; (k<0,l>0)$ is satisfied with some positive numbers $\{\alpha_{k,l}\colon k<0,l>0\}$ where $\alpha_{k-1,l} \leq \alpha_{k,l}$ holds for any $k<0,l>0$. We set $\irY_k = \vee\{e_{k,l}\colon l\in\N\}$ for $k<0$ and $\irY_0 = \irX_0$, moreover, we define $U$ on these subspaces such that $U e_{k-1,l} = e_{k,l} \; (k<0,l > 0)$, $U\irY_{-1} = \irY_0$ and $U\irY_0 = \irY_1$. 

With the above choices points (i)--(iii) of Lemma \ref{rev_lem} are satisfied, therefore our proof is complete.
\end{proof}


\section{Final remarks and open questions}\label{fin_rem_ques_sec}

Throughout this section $\irH$ will be a separable, infinite dimensional Hilbert space. It is known that if $W\in\irB(\irH)$ is a normal bilateral weighted shift operator, then it is a constant multiple of a unitary bilateral shift operator (see \cite{Sh} for further information about shift operators). The following question arises naturally and as far as we know it is open.

\begin{question}
What are those weighted bilateral shift operators which are similar to normal operators?
\end{question}

It is quite easy to see that if the weighted bilateral shift $W$ is similar to a normal operator $N\in\irB(\irH)$, then $N$ is cyclic and the scalar-valued spectral measure is rotation-invariant.

The next questions concern $L$-asymptotic limits of power bounded operators which we left open.

\begin{question}
Which positive, invertible operators $A\in\irB(\irH)$ can be the $L$-asymptotic limits of power bounded operators that are similar to unitary operators?
\end{question}

\begin{question}
Which positive operators $A\in\irB(\irH)$ can be the $L$-asymptotic limits of power bounded operators?
\end{question}

It is sure that not every positive, invertible operator $A$ can be obtained in such a way. The main reason can be found in \cite{Ge_matrix} where it was proved that necessarily $\|A_{T,L}\|\geq 1$ whenever $A_{T,L} \neq 0$. Another reason can be given: equation (\ref{isom_as_eq}) shows that if $A_{T,L} = t I$ holds for some $t > 0$, then $T$ is an isometry, hence $t = 1$. The following question arises naturally:

\begin{question}
Suppose that $A\in\irB(\irH)$ is positive, $\sigma_e(A) = \{t\}$ holds with some $t>0$ and $A$ is the $L$-asymptotic limit of a power bounded operator. Then does necessarily $t = 1$ follow?
\end{question}

As far as we know, no counterexamples can be found in the literature. The $t=0$ case is possible, since a finite-rank operator can easily be the $L$-asymptotic limit of a power bounded operator. The simplest example is if we choose a finite rank projection for $T$ (see \cite{Ge_matrix} concerning further examples). However, the following question is open as well.
 
\begin{question}
If $A\in\irB(\irH)$ is injective, compact, positive and it is the $L$-asymptotic limit of a power bounded operator, then does necessarily $A = 0$ follow?
\end{question}


\section*{Acknowledgement} 

The author expresses his thanks to professor Carlos S. Kubrusly, professor L\'aszl\'o K\'erchy, Patryk Pagacz and the anonymous referee for their useful comments which improved this paper.

The author was supported by the "Lend\"ulet" Program (LP2012-46/2012) of the Hungarian Academy of Sciences. 

This research was also supported by the European Union and the State of Hungary, co-financed by the European Social Fund in the framework of T\'AMOP-4.2.4.A/ 2-11/1-2012-0001 'National Excellence Program'.

\bibliographystyle{ams}

\end{document}